\newcommand{\F}{\mathbb{F}}
\newcommand{\Z}{\mathbb{Z}}
\newcommand{\kk}{\Bbbk}
\def\SL{\operatorname{SL}}
\def\SL2{\operatorname{SL}_{2}(K)}
\def\GL2{\operatorname{GL}_{2}(K)}
\def\GL{\operatorname{GL}}
\def\SL{\operatorname{SL}}
\def\id{\operatorname{id}}
\def\Z{\mathbb{Z}}
\def\im{\operatorname{im}}
\def\res{\operatorname{res}}
\def\im{\operatorname{im}}
\newtheorem{Lemma}{Lemma}
\newtheorem{Theorem}[Lemma]{Theorem}
\newtheorem{Corollary}[Lemma]{Corollary}
\newtheorem{prop}[Lemma]{Proposition}
\newtheorem*{Corollary of Conjecture}{Corollary of Conjecture}
\theoremstyle{definition}
\theoremstyle{remark}
  \newtheorem{rem}[Lemma]{Remark}
\newtheoremstyle{Acknowledgments}% name
  {}% {\topsep}%      Space above
    {}% {\topsep}%      Space below
     {}%         Body font
     {}%         Indent amount (empty = no indent, \parindent = para indent)
    {\bfseries}% Thm head font
    {}%        Punctuation after thm head
     {.5em}%     Space after thm head: " " = normal interword space;
\theoremstyle{Acknowledgments}
\title{The differential invariants of $\SL_2(\F_3)$ acting on trace-free matrices over $\mathbb{F}_3$.}
\author{Jonathan Elmer$^{1}$ \and Anja Meyer$^{2}$}
\date{ \today \\
  $^{1}$Middlesex University \\
  $^{2}$University of Manchester}
\begin{document}

\maketitle

\begin{abstract}
Let $M$ denote the vector space of $2 \times 2$ matrices with coefficients in $\F_3$ and trace zero. Let $G = \SL_2(\F_3)$. Then $G$ acts on $M$ via conjugation. Let $R =(S(M^*) \otimes \Lambda(M^*))$ be the algebra of differential forms on $M$. We compute a minimal generating set for $R^G$ as a commutative-graded algebra. In doing so we utilise the theory of Cohen-Macaulay modules and results in the theory of covariants.
\end{abstract}

\section{Introduction}

    Computing invariant rings can be a challenging task, especially in dividing characteristic. The theory of Cohen-Macaulay modules (e.g. Chapter 2.8 in \cite{CW}) can offer some useful theory to use. Invariant theory plays an important role in group cohomology computations, and we are keen to find new methods for computations. As a first fact we recall that for any finite group $G$ and field $k$, $H^0(G,k)=k^G$. We also observe for a normal subgroup $N$ of $G$, the action of $G/N$ on $N$ induces a linear action of $G/N$ on $H^*(N,k)$. Moving to modular coefficients and using the theory of stable elements, by Chapter II.6 in \cite{AM} $H^*(G,\F_p)\cong H^*(N,\F_p)^{G/N}$. Note that if $N=P$ is further a Sylow $p$-subgroup, this is an application of non-modular invariant theory, as in that case $|G/P|$ is coprime to $|G|$. But even for not necessarily normal subgroups of $G$, invariant theory plays a role: a result of Swan \cite{S} states that for any abelian Sylow $p$-subgroup $P$ of $G$ the composition $ H^*(P,\F_p)^{N_G(P)}\cong \im(\res^G_P) \cong H^*(G,\F_p)$, where the second isomorphism is due to the theory of stable elements as seen in \cite{CE}.  A more specific example is found in \cite{M} where the cohomology ring $H^*(\SL_2(\Z/3^n),\F_3)$, for $n>1$ is computed via stable elements, where the invariant ring $(S(M^*)\otimes \Lambda(M^*))^{\SL_2(\F_3)}$, for $M$ a module of dimension 3, is a main ingredient. This is the invariant ring described in the abstract. In this paper, we compute a minimal set of generators of this graded-commutative invariant ring explicitly, using a combination of Cohen-Macaulay modules and covariants.\\
\noindent We start by giving the necessary background material in modular and non-modular invariant theory, as the computations in this paper combine both: on the one hand, the characteristic of $\F_3$ divides  $|\SL_2(\F_3)|$; on the other hand, we will show that $M$ is isomorphic to a module induced from a non-modular subgroup of $\SL_2(\F_3)$. Then, using a minor generalisation of \cite[Proposition~5]{E} (proved here as Lemma \ref{use molien}) moves us into the non-modular case. This trick enables the use of tools from the non-modular setting, such as the Molien series.\\
\noindent The background section is followed by an explicit computation of the generators of $(S(M^*)\otimes \Lambda(M^*)^{\SL_2(\F_3)}$. These are given in Theorem \ref{Final}, which is the main theorem of this paper.

\section{Background}

Let $k$ be a field, $G$ a finite group and $V$ a finite-dimensional $kG$-module. Let $v_1,v_2, \ldots, v_n$ be a basis of $V$ and let $x_1,x_2, \ldots, x_n$ be the corresponding dual basis of $V^*$. The action of $G$ on $V$ induces an action of $G$ on $V^*$ defined by
\[(g(f))(v) = f(g^{-1}(v))\]
and we may extend this, by algebra automorphisms, to an action on the $k$-algebra $k[V]:= S(V^*)$. Setting $\deg(x_i)=2$ for each $i$ endows $S(V^*)$ with a grading which is respected by the $G$-action. The fixed points $k[V]^G$ of this action form a graded subalgebra called the subalgebra of invariants.

Now let $W$ be a further $kG$-module with basis $w_1, w_2, \ldots, w_m$. Let $k[V] \otimes W$ denote the free $k[V]$-module spanned by this basis. $G$ acts on $k[V] \otimes W$ via the formula
\[g(f \otimes w_i) = g (f) \otimes g (w_i)\] for $f \in k[V]$, extended linearly, and one may now show that the fixed points $(k[V] \otimes W)^G$ of this action form a $k[V]^G$-module. This is called the module of $W$-covariants. Since $k[V]$ is finitely generated over $k[V]^G$, the same is true of $(k[V] \otimes W)$. Since $k[V]^G$ is Noetherian and $(k[V] \otimes W)^G$ is a submodule of $k[V] \otimes W$, we get that the module $W$-covariants is finitely generated in general.

Consider the special case $R_i(V):= k[V] \otimes (\Lambda^i(V^*))$. This is called the module of differential $i$-forms. Usually one writes $dx_1,dx_2, \ldots, dx_n$ for the $k[V]$-basis of $R_i(V)$, which explains the terminology. The exterior product $\wedge: \Lambda^i(V^*) \times \Lambda^{j}(V^*) \rightarrow \Lambda^{i+j}(V^*)$ can be extended $k[V]$-linearly to an exterior product on $R(V):= \bigoplus_{i=0}^n R_i(V) = S(V^*) \otimes \Lambda(V^*)$ (note that $\Lambda^i(V) = 0$ for $i>n)$ which makes $R(V)$ into a non-commutative $k$-algebra. The fixed points $R(V)^G$ are called the algebra of differential invariants. 

 Another way of viewing $R(V)$ is as follows: write $y_i:= dx_i$ for each $i$ and endow $\Lambda^*(V^*) = \Lambda[y_1,y_2, \ldots, y_n]$ with a grading by setting $\deg(y_i)=1$ for each $i$. Then $R(V)$ is a commutative-graded $k$-algebra, i.e. for each $f, g \in R$ we have
\[fg = (-1)^{\deg(f)\deg(g)}gf.\]

The goal of this paper is to find a minimal generating set for $R(M)^G$, where $M$ is the vector space of $2 \times 2$ traceless matrices over $\F_3$ and the action of $G=\SL_2(F_3)$ is by conjugation. In order to determine the module generators of the summands $R_i(M)$, it is helpful to recall the following notions.\\

 For $S$ a finitely generated $k$-algebra,  $k$ a field, and $B$ a finitely generated $S$-module, a homogeneous system of parameters of positive degree for $B$ is a set of elements $\{f_1,...,f_n\} \subset S$ where $n$ is the Krull-dimension of $B$, with the property that $B$ is finitely generated as a module over the ring $A=k[f_1,...,f_n]$. It can be shown that if $B$ a free module over a subalgebra of $S$ generated by a homogeneous system of parameters, it is free over any such subalgebra. We say $B$ is a Cohen-Macaulay $S$-module if it has this property. In addition, $S$ is a Cohen-Macaulay algebra if it is a Cohen-Macaulay module over itself.\\

  \iffalse
 Let $S$ be a Noetherian local ring. Let $M$ be a finite $S$-module. We say $M$ is Cohen-Macaulay if dim$(Supp(M))=$depth$(M)$. $S$ is Cohen-Macaulay if it is a Cohen-Macaulay module over itself.\\ 
 \fi
\iffalse
%don't think we need this 
The following is well-known and crucial to check the number of generators, and can be found in \cite[Section~3.1.3]{CW}:
\begin{Lemma}\label{free over}
    Let $\{f_1,...,f_n\}$ be a homogeneous system of parameters for the polynomial ring $S=k[x_1,...,x_n]$ and suppose the degree of each of the $f_i$ is $d_i$. Then $S$ is a free $A=k[f_1,...,f_n]$-module on $\Pi_{i=1}^n d_i$ generators.
\end{Lemma}
\fi
A further fundamental concept used in this paper is a version of Molien's formula for the Hilbert series of a ring of relative invariants. For $S$ a finitely generated $k$-algebra with homogeneous components $S_d$, the Hilbert series of $R$ is the power series
\begin{equation}
    H(S,t)=\sum_{0\leq i} \dim_k(S_i)t^i.
\end{equation}
Let $G$ be a finite group, $V$ a vector space and $k$ a field with char$(k)=0$. Further let $\chi: G \rightarrow k$ be a linear character. We say $f \in k[V]$ is a $\chi$-relative invariant if $g(f) = \chi(g)f$ for all $g \in G$. The set of $\chi$-relative $G$-invariants is denoted $k[V]^G_\chi$ and is a module over $k[V]^G$. Molien's formula (found for example in \cite[Theorem~3.7.1]{CW}) was established for $k$ such that char$(k)=0$. In fact we will need a version of this result in a finite field of order $p$ not dividing $|G|$. We proceed as follows: let $e$ be the exponent of $G$. Let $K$ be a field of characteristic $0$ containing a primitive $e$th root of unity and choose a homomorphism $\phi: k \rightarrow K$ which identifies $k$ with the multiplicative group of $e$th roots of unity in $K$. For any linear character $\chi: G \rightarrow k$ we define the Brauer lift $\chi^0: G \rightarrow K$ by $\chi^0 = \phi \circ \chi$. Further, for any matrix $X \in \GL_n(k)$ we define $\det^0(X) \in K$ to be the product of $\phi(\lambda)$ as $\lambda$ runs through the eigenvalues of $X$. The following formula is Theorem 2.5.2 in \cite{B}
\begin{equation}\label{molien}
    H(k[V]_\chi^G,t)=\frac{1}{|G|}\big(\sum_{g\in G}\frac{\chi(g)}{\textnormal{det}(Id_V-t g)} \big),
\end{equation}
and it holds for $k$ a field of characteristic zero. As observed in \cite{B}, the formula holds when $k$ is a field of finite characteristic not dividing $|G|$ after replacing $\chi$ and $\det$ by$\chi^0$ and $\det^0$ respectively.

\iffalse
and it holds for a finite field $k$ of characteristic not dividing $|G|$. When using it in the proof of Proposition \ref{use molien} we replace $\chi$ and $\det$ by$\chi^0$ and $\det^0$ respectively.
\fi
\iffalse

In order to move to positive characteristic, we turn to Page 22 in \cite{B}. In the remark following Theorem 2.5.2, Benson remarks that this formula also holds when char$(k)$ is positive but does not divide $|G|$, provided that we use a "Brauer lift of the trace and determinant obtained by lifting eigenvalues to characteristic zero and adding or multiplying". In Lemma \ref{use molien} we move the argument from diving characteristic to cross characteristic and use Benson's observation on Molien's formula.\\
\fi
 
The computation of the generators of the summands in $R_i(M)^G$ relies on some material in the study of covariants. In particular, we will require the following fundamental result, due to Eagon and Hochster \cite{EH}

\begin{Lemma}\label{EH} Let $k$ be a field whose characteristic does not divide $|G|$. Then $(k[V] \otimes W)^G$ is a Cohen-Macaulay $k[V]^G$-module.
\end{Lemma}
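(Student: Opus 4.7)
The plan is to reduce the claim to the classical statement that a direct summand of a free module over a graded polynomial ring is free, via the Reynolds operator together with the Hochster--Eagon strategy for polynomial rings.

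First, I would pick a homogeneous system of parameters $f_1,\ldots,f_n$ for the invariant ring $k[V]^G$, where $n=\dim V$, and set $A=k[f_1,\ldots,f_n]$. Because $k[V]$ is a finite module over $k[V]^G$ (the classical Hilbert--Noether finiteness theorem for finite groups), the extension $A\subset k[V]$ is also finite, so $f_1,\ldots,f_n$ is simultaneously a homogeneous system of parameters for $k[V]$. Since $k[V]$ is a polynomial ring, it is Cohen--Macaulay as a module over itself, and therefore free as an $A$-module by the defining property recalled in the excerpt. Tensoring with the finite-dimensional $k$-vector space $W$ shows that $k[V]\otimes W$ is a free $k[V]$-module of rank $\dim_k W$, hence also free as an $A$-module.

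Next I would exploit the hypothesis $\chr(k)\nmid |G|$: the Reynolds operator
\[
\rho\colon k[V]\otimes W\longrightarrow (k[V]\otimes W)^G,\qquad \rho(x)=\tfrac{1}{|G|}\sum_{g\in G} g(x),
\]
is a well-defined projection onto the $G$-fixed part. Because the elements of $A\subset k[V]^G$ are $G$-invariant, $\rho$ is $A$-linear, so $(k[V]\otimes W)^G$ is a direct $A$-summand of $k[V]\otimes W$. In particular $(k[V]\otimes W)^G$ is a finitely generated graded projective $A$-module. Since $A$ is a graded polynomial ring, every finitely generated graded projective $A$-module is free (a finitely generated graded module over $A$ is free iff flat iff projective, a standard application of the graded Nakayama lemma). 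Consequently $(k[V]\otimes W)^G$ is free over $A$, which is exactly the criterion for being a Cohen--Macaulay $k[V]^G$-module given in the background section.

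The only step I regard as genuinely delicate is the verification that a homogeneous system of parameters for $k[V]^G$ remains one for $k[V]$; this relies on the finiteness of $k[V]$ over $k[V]^G$, which in turn uses that $G$ is finite (and would already be needed to know $k[V]^G$ has Krull dimension $n$). Everything else — existence of the Reynolds operator, freeness of $k[V]\otimes W$ over $k[V]$, and freeness of graded projectives over $A$ — is routine given the hypothesis $\chr(k)\nmid |G|$.
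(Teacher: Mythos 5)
Your argument is correct, but it is worth noting that the paper does not prove this statement at all: it simply records it as a known fundamental result and attributes it to Hochster and Eagon, citing \cite{EH}. What you have written is, in effect, the standard textbook proof of that result in the graded, non-modular, finite-group setting: pass to the polynomial subalgebra $A$ generated by a homogeneous system of parameters (using finiteness of $k[V]$ over $k[V]^G$ to see that the same elements form a system of parameters upstairs), observe that $k[V]\otimes W$ is free over $A$, and then use the Reynolds operator --- which exists precisely because $\operatorname{char}(k)\nmid|G|$ and is $A$-linear --- to exhibit $(k[V]\otimes W)^G$ as a graded direct summand, hence a graded projective, hence a free $A$-module by graded Nakayama. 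Every step is sound. The only point you pass over silently is that the definition of Cohen--Macaulay module used in the background section presupposes that $f_1,\ldots,f_n$ is a homogeneous system of parameters \emph{for the module} $(k[V]\otimes W)^G$, i.e.\ that this module has Krull dimension $n$; this holds because it is a nonzero torsion-free $k[V]^G$-module (nonzero since the Reynolds operator is surjective onto it and $k[V]$ contains copies of every irreducible in the non-modular case), but it deserves a sentence. In short: the paper buys the statement by citation, while your proof supplies a short self-contained justification that is entirely adequate for the special case needed here.
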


\iffalse
Let $V,W$ be a pair of finite-dimensional $kG$-modules. Then $g\in G$ acts on the set $k[V,W]=S(V^*)\otimes W$ of polynomial morphisms $\phi \in \{V\rightarrow W\}$ via $(g\phi)(v)=g\phi(g^{-1}v)$. The $\phi$ fixed under this action are called covariants and we denote the set of all such $\phi$ by $k[V,W]^G$.

We wish to study the modular case where the characteristic of $k$ divides $|G|$. The following result from \cite{BC} provides the foundation and can be found in \cite{BC}:

\begin{prop}
    Let $k$ be field whose characteristic divides $|G|$, where $G$ is a finite
group acting on vector spaces $V,W$.\\
(i) Suppose that codim$(V^G) = 1$. Then $k[V,W]^G$ is a free $k[V ]^G$-module.\\
(ii) Suppose that codim$(V^G) \leq 2$. Then $k[V,W]^G$ is a Cohen-Macaulay $k[V ]^G$-module.
\end{prop}
\fi

\noindent Let $H\leq G$, $T$ be a left-transversal of $H$ in $G$, and $\{e_t|t\in T\}$ be a basis for the left permutation module $k[G/H]$. Recall that $H$ acts trivially on this. We define a map
\begin{center}
$\Theta:k[V]^H \rightarrow k[V]\otimes k(G/H)$\\
$\Theta:f \mapsto \Sigma_{t\in T}tf\otimes e_t$.
\end{center}
\noindent The following result on covariants is a minor generalisation of Proposition 5 in \cite{E} and its proof is crucial for our computation:

\begin{Lemma}\label{use molien}
Let $k$ be a field, $G$ a group, and $\mathcal{A}$ a $k G$-algebra. Let $H \leq G$ and let $T$ be a set of left coset representatives for $H$ in $G$. Then the map $\Theta: \mathcal{A}^H \rightarrow (\mathcal{A} \otimes k(G/H))^G$ defined by
\[\Theta(f) = \sum_t t ( f \otimes t)\] is an isomorphism of $\mathcal{A}^G$-modules.
\end{Lemma}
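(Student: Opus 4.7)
The plan is to prove bijectivity of $\Theta$ by exhibiting an explicit inverse, after which $\mathcal{A}^G$-linearity follows immediately. Conceptually the lemma is a concrete instance of Frobenius reciprocity: since $k(G/H) \cong \Ind_H^G k$, we have $(\mathcal{A} \otimes k(G/H))^G \cong (\Ind_H^G \mathcal{A})^G \cong \mathcal{A}^H$ on abstract grounds, and the content of the lemma is that $\Theta$ realises this isomorphism explicitly. Throughout I would fix the notation $gt = \pi(gt)\, h_{g,t}$ for the unique decomposition with $\pi(gt) \in T$ and $h_{g,t} \in H$, and note that for each $g \in G$ the assignment $t \mapsto \pi(gt)$ is a permutation of $T$; in the induced-module picture this means $g \cdot e_t = e_{\pi(gt)}$.

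First I would verify that $\Theta(f) \in (\mathcal{A} \otimes k(G/H))^G$ whenever $f \in \mathcal{A}^H$. Expanding $\Theta(f) = \sum_{t \in T} t(f) \otimes e_t$ and acting by $g$ gives $\sum_t (gt)(f) \otimes e_{\pi(gt)} = \sum_t \pi(gt)(h_{g,t} f) \otimes e_{\pi(gt)}$; the hypothesis $h_{g,t} f = f$ collapses this, and the permutation $t \mapsto \pi(gt)$ reindexes the sum back to $\Theta(f)$. Next I would define $\Psi(\xi) = a_1$, where $\xi = \sum_t a_t \otimes e_t$ is the unique basis expansion and $1 \in T$ is the identity. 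For $h \in H$ we have $\pi(h) = 1$, and $\pi(ht) = 1$ forces $t \in H$ and hence $t = 1$; thus $h$ fixes $e_1$ and permutes the remaining $e_t$ among themselves. Equating $e_1$-coefficients in $h\xi = \xi$ yields $h(a_1) = a_1$, confirming $\Psi(\xi) \in \mathcal{A}^H$.

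The composition $\Psi \circ \Theta$ is the identity by inspection of the $t = 1$ summand. For $\Theta \circ \Psi$, I would invoke $G$-invariance of $\xi$ at $g = t \in T$ and isolate the coefficient of $e_t$ in $t\xi = \xi$: since $\pi(ts) = t$ forces $s \in H$ and hence $s = 1$, only the $s = 1$ term on the left survives, yielding $a_t = t(a_1)$ and thus $\xi = \sum_t t(a_1) \otimes e_t = \Theta(\Psi(\xi))$. Finally, $\mathcal{A}^G$-linearity follows because for $\alpha \in \mathcal{A}^G$ and any $t$ one has $t(\alpha f) = t(\alpha)\, t(f) = \alpha\, t(f)$, so $\Theta(\alpha f) = \alpha\, \Theta(f)$.

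The main obstacle is simply the coset bookkeeping. The two characterisations $\pi(h) = 1 \iff h \in H$ and $\pi(ts) = t \iff s \in H$ for $t \in T$, together with the bijectivity of $t \mapsto \pi(gt)$, are the combinatorial facts that do all the work; once these are isolated, the remainder of the argument reduces to reindexing and reading off coefficients, which should streamline the presentation considerably.
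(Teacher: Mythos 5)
Your proof is correct and is essentially the standard explicit Frobenius reciprocity argument that the paper delegates to \cite[Proposition~5]{E} rather than writing out: well-definedness via the permutation $t \mapsto \pi(gt)$ of $T$, the explicit inverse $\xi = \sum_t a_t \otimes e_t \mapsto a_1$, and $\mathcal{A}^G$-linearity from $t(\alpha) = \alpha$ for $\alpha \in \mathcal{A}^G$. The only cosmetic point is that you implicitly normalise $T$ so that the coset $H$ is represented by $1$; this is harmless, since one may always choose $T$ this way, or else replace $a_1$ by the coefficient $a_{t_0}$ of the representative $t_0 \in T \cap H$, which acts trivially on $\mathcal{A}^H$ so that the same computations go through.
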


\begin{proof} An adaptation of the proof of \cite[Proposition~5]{E} with $\mathcal{A}$ replacing $k[V]$ throughout.
\end{proof}

\iffalse
\begin{prop}\label{Theta}
    $\Theta$ induces an isomorphism of graded $k[V ]G$-modules
    \begin{center}
        $k[V]^H\cong (k[V]\otimes k(G/H))^G$.
    \end{center}
\end{prop}
\fi

\noindent We shall continue with the computation of $(S(M^*)\otimes \Lambda (M^*))^{SL_2(\F_3)}$.

\section{Computing Invariants}

\iffalse
Let $M$ denote the vector space of $2 \times 2$ matrices with coefficients in $\F_3$ and trace zero. Let $G = \SL_2(\F_3)$. Then $G$ acts on $M$ via conjugation. Let $R =(S(M^*) \otimes \Lambda(M^*))$ be the algebra of differential forms on $M$, where we note that $M$ is self-dual. The aim of this note is to find a minimal basis for $R^G$ as a graded-commutative algebra.
\fi

It is convenient, for reasons which will become clear later, to choose a basis
\[v_1 = \begin{pmatrix} 0 & 1 \\ -1 & 0 \end{pmatrix}, v_2 =  \begin{pmatrix} -1 & -1 \\ -1 & 1 \end{pmatrix}, v_3 =  \begin{pmatrix} 1 & -1 \\ -1 & -1 \end{pmatrix} \]
for $M$. Let $x_1,x_2,x_3$ be the corresponding dual basis for $M^*$, which will generate $S(M^*)$ as a commutative algebra. Let $y_1,y_2,y_3$ be the dual basis of generators for $\Lambda(M^*)$. Now $R$ can be viewed as a free graded-commutative algebra generated over $\F_3$ by $x_1,x_2,x_3,y_1,y_2,y_3$ in which $\deg(x_i)=2$ and $\deg(y_i)=1$. We define an action of $G$ on $M$ by setting
\[g( v) = gvg^{-1}\]
for $g \in G$ and $v \in M$. The induced action on $M^*$ is then denoted by $g(f)$ for $g \in G$ and $f \in M^*$, and this extends algebraically to an action on $R$.

We label certain elements of $G$: let 
\[t:= \begin{pmatrix} 1 & 1\\ 0 & 1 \end{pmatrix},\]
which generates the Sylow-3-subgroup $P \cong C_3$ of $G$. Let
\[i:= \begin{pmatrix} 0 & 1 \\ -1 & 0 \end{pmatrix}.\]
Note that $t$ and $i$ together generate $G$. Further, the kernel $K$ of the action of $G$ on $M$ is $\{\pm e\} = \{e, i^2\}$ where $e$ denotes the identity matrix. 
Further, we set
\[j:= t^{-1}it; k:= tit^{-1}.\]
Notice that $i^2=j^2=k^2=-e$ and $ij=k, jk=i, ki=j$; in other words, $i, j$ generate a subgroup $H = \{\pm e, \pm i, \pm j, \pm k\} \cong Q_8$. Moreover, the matrices representing $j$ and $k$ are the same as $v_3$ and $v_2$ respectively, from which we see immediately that
\[t ( v_s) = v_{s+1}\] with subscripts understood modulo 3. 
Let $\rho: G \rightarrow \GL_3(\F_3)$ be the representation afforded by the action of $G$ on $M$. Then we have
\[\rho(t) = \begin{pmatrix} 0 & 1 & 0\\ 0 & 0 & 1\\ 1 & 0 & 0 \end{pmatrix}\]
and
\[\rho(i) = \begin{pmatrix} 1 & 0 & 0\\ 0 & -1 & 0 \\ 0 & 0 & -1 \end{pmatrix}.\]
Each of these is their own inverse-transpose, therefore
\[t( x_s) = x_{s+1},\] ditto $y_s$. Similarly one may check that
\[i( x_1) = x_1, i ( x_2) = -x_2, i ( x_3) = -x_3,\] ditto $y_s$ again.
For future use we note that
\[j ( x_1) = -x_1, j ( x_2) = -x_2, i ( x_3) = x_3,\]
and
\[k ( x_1) = -x_1, k ( x_2) = x_2, i ( x_3) = -x_3.\]
Define a one-dimensional representation $W = \langle w \rangle$ of $H$ as follows:
\[i ( w) = w, j ( w=)-w, k ( w)=-w.\]
Let $\chi$ denote the corresponding character $H \rightarrow \F_3^{\times}$. The following observation is crucial:

\begin{Lemma}
We have $M^* \cong \F_3(G/H) \otimes W$.
\end{Lemma}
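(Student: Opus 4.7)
The plan is to interpret $\F_3(G/H)\otimes W$ as the induced module $\Ind_H^G W = \F_3 G\otimes_{\F_3 H} W$ and to exhibit an explicit $\F_3 G$-isomorphism to $M^*$. Since $[G:H] = |G|/|H| = 24/8 = 3$, both modules have $\F_3$-dimension $3$, so it will suffice to produce a surjection.

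The key observation is that the line $\langle x_1\rangle\subset M^*$ is an $H$-submodule of $M^*$ isomorphic to $W$: the formulae computed just before the statement give $i\cdot x_1 = x_1$, $j\cdot x_1 = -x_1$ and $k\cdot x_1 = -x_1$, which agree with $\chi(i), \chi(j), \chi(k)$ respectively. Together with the fact that $-e$ lies in the kernel $K$ of the $G$-action on $M$, this determines the $H$-action on $x_1$ on all eight elements of $H$ and shows $\langle x_1\rangle\cong W$ as $\F_3 H$-modules. I would then define
\[
\phi\colon \F_3 G\otimes_{\F_3 H} W\longrightarrow M^*,\qquad g\otimes w\mapsto g\cdot x_1,
\]
which is well-defined as a map out of the tensor product over $\F_3 H$ precisely because of the previous step, and is manifestly a $G$-homomorphism. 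To see surjectivity, compute $\phi(1\otimes w)=x_1$, $\phi(t\otimes w)=t\cdot x_1 = x_2$ and $\phi(t^{-1}\otimes w)=t^{-1}\cdot x_1 = x_3$; as these form the dual basis of $M^*$, the map is onto, hence an isomorphism.

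The only real obstacle is parsing the notation $\F_3(G/H)\otimes W$ correctly. The paper's other uses of $\F_3(G/H)$ (for instance in Lemma \ref{use molien}) carry a diagonal $G$-action, but here $W$ is only an $H$-module and the symbol must be read as induction; once this identification is in hand, the verification reduces to the character computation on $x_1$ above, which is immediate from the explicit matrices of $i, j, k$ already recorded.
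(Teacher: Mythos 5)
Your proof is correct and is essentially the paper's argument run in the opposite direction: the paper defines $\phi(x_s) = t^{s-1}\otimes w$ and verifies equivariance under the generators $t$ and $i$ by direct computation, whereas you build the inverse map $g\otimes w\mapsto g\cdot x_1$ via the universal property of induction, with the same key facts (that $\langle x_1\rangle\cong W$ as $\F_3H$-modules and that $t$ cyclically permutes the $x_s$) doing all the work. Your reading of $\F_3(G/H)\otimes W$ as $\Ind_H^G W$ is the intended one, as the $G$-action used in the paper's own proof confirms.
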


\begin{proof} 
$\{e,t,t^2\}$ is a set of left coset representatives for $H$ in $G$. Define $\phi: M^* \rightarrow  \F_3(G/H) \otimes W$ as follows:
\[ \phi(x_s) = t^{s-1} \otimes w.\]
Then $$t ( \phi(x_s)) = t^s \otimes w = \phi(x_{s+1}) = \phi(t ( x_s))$$
and
\begin{align*}
i ( \phi(x_1)) &= i ( (e \otimes w)) &= e \otimes (i (w)) &= e \otimes w &= \phi(x_1) &= \phi(i ( x_1));\\
i ( \phi(x_2)) &= i ( (t \otimes w)) &= t \otimes (j ( w)) &= -t \otimes w &= -\phi(x_2) &= \phi(i ( x_2));\\
i ( \phi(x_3)) &= i ( (t^2 \otimes w)) &= t^2 \otimes (k ( w)) &= -t^2 \otimes w &= -\phi(x_3) &= \phi(i ( x_3)).
\end{align*}
So $\phi$ is an isomorphism.
\end{proof}

The ring of invariants $S(M^*_p)^{G_p}$, where $M_p$ denotes the $2 \times 2$ trace-free matrices over $\F_p$ and $G_p = \SL_2(\F_p)$, was computed by Anghel in \cite{An}. Translating his result for $p=3$ into our preferred basis, we obtain:

\begin{prop}
Let 
\begin{align*} a_1 &= x_1^2+x_2^2+x_3^2;\\
a_2 &= x_1x_2x_3\\
a_3 &= x_1^4+x_2^4+x_3^4
\end{align*}

The algebra of invariants $S(M^*)^G$ is Cohen-Macaulay, and a free module over $A:=K[a_1,a_2,a_3]$ generated by $1$ and
\[b: = x_1^4x_2^2+x_1^2x_3^4+x_2^4x_3^2.\]
\end{prop}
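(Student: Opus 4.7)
The plan is to reduce the proposition to Anghel's computation \cite{An} of $S(M_p^*)^{\SL_2(\F_p)}$ specialised at $p=3$, and to confirm that the four polynomials displayed in the statement are the translations of his generators into the dual basis of $(v_1, v_2, v_3)$. Two ingredients need to be checked explicitly: that $a_1, a_2, a_3, b \in S(M^*)^G$, and that $\{a_1, a_2, a_3\}$ is a homogeneous system of parameters of the correct degrees, so that the free-module structure over $A = \F_3[a_1, a_2, a_3]$ transported from Anghel's setup remains valid.

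For invariance under $G = \langle t, i\rangle$, since $t(x_s) = x_{s+1}$ the cyclic symmetry of $a_1, a_2, a_3, b$ in $(x_1, x_2, x_3)$ (after reordering summands in $b$) gives $t$-invariance at once. Since $i$ fixes $x_1$ and negates $x_2, x_3$, any monomial of even total degree in $\{x_2, x_3\}$ is $i$-fixed, and direct inspection shows this holds for every term of $a_1, a_2, a_3, b$. For the hsop property, it suffices to verify that the common zero locus in $\overline{\F_3}^3$ is $\{0\}$: from $a_2 = 0$ some $x_i$ vanishes, and by cyclic symmetry we may take $x_1 = 0$; then $a_1 = 0$ gives $x_3^2 = -x_2^2$, whence $a_3 = x_2^4 + x_3^4 = 2x_2^4 = -x_2^4$ in characteristic $3$, forcing $x_2 = 0 = x_3$.

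With invariance and the hsop confirmed, Anghel's theorem yields that $S(M^*)^G$ is Cohen-Macaulay and free over $A$ on two generators of degrees $0$ and $12$; the former is $1$ and the latter may be taken to be $b$. The main technical obstacle is the change-of-basis verification: one must show that, after re-expressing Anghel's degree-$12$ secondary invariant (given in his basis of trace-free $2\times 2$ matrices) in terms of the dual coordinates $x_1, x_2, x_3$ of $(v_1, v_2, v_3)$, the result is $b$ modulo $A$. This is a routine but somewhat intricate linear-algebra computation, performed by writing the matrix expressing $(v_1, v_2, v_3)$ in a standard basis for trace-free matrices and substituting its inverse-transpose into Anghel's formula; no further conceptual input is required.
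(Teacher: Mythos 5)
Your proposal is essentially sound and follows the same primary route as the paper: the proposition is introduced there precisely as a translation of Anghel's computation into the basis dual to $(v_1,v_2,v_3)$, and your explicit checks of $G$-invariance (via the generators $t$ and $i$) and of the h.s.o.p.\ property (via the common zero locus in $\overline{\F}_3^3$) are both correct and are exactly the verifications the paper waves through with ``it is easy to see directly.'' The main difference is in how the module structure is justified. The paper's accompanying remark does not lean on transporting Anghel's free-module structure: it gets Cohen--Macaulayness from the Fleischmann--Kemper--Shank depth bound (using $\dim(M^P)=1\geq\dim(M)-2$, a genuinely modular argument), gets the rank $2=\frac{2\cdot 3\cdot 4}{|G/K|}$ over $A$ from Derksen--Kemper, and then identifies the secondary invariant by a direct low-degree computation. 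Your route instead imports both Cohen--Macaulayness and the secondary degree from Anghel's theorem, which is legitimate but makes the whole statement hostage to the ``routine but intricate'' change-of-basis computation you defer at the end. That deferral is the one place you are doing more work than necessary: once you know (from either source) that $S(M^*)^G$ is free over $A$ of rank $2$ with secondary degrees $0$ and $12$, you do not need to translate Anghel's secondary invariant at all. It suffices to observe that $b$ is an invariant of degree $12$ not lying in $A_{12}$ --- immediate, since every element of $A_{12}$ is a combination of $a_1^3$, $a_1a_3$, $a_2^2$ and hence has equal coefficients on $x_1^4x_2^2$ and $x_1^2x_2^4$, whereas $b$ does not --- and any such element completes $1$ to a free generating set. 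This small observation closes the only real gap in your write-up and brings it in line with the paper's ``computing the invariants in all degrees up to six.''
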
 

\begin{rem}
As $\dim(M^P)=1 \geq \dim(M)-2$ we get that $S(M^*)^G$ is Cohen-Macaulay by \cite{FKS}. It is easy to see directly that $\{a_1,a_2,a_3\}$ form a homogeneous system of parameters. Now by Theorem 3.7.1 in \cite{DK}  we get that $S(M^*)^G$ has rank 2 over $A$ (remember that the action of $G$ is not faithful, it is really an action of $G/K$ which has order 12). We see that $\{1,b\}$ are the secondary invariants by computing the invariants in all degrees up to six. %, which is all we need by Lemma \ref{free over}.  
\end{rem}

%we don't need the Lemma. As soon as we've found a non-trivial secondary invariant, we're done, because the module has rank 2. We could delete free over from the introduction I think.

We recall that $\Lambda$ is a graded algebra consisting of vector spaces and as such we can decompose $R=S(M^*) \otimes \Lambda(M^*)$ as
\begin{center}
    $R=\bigoplus_{i=0}^3 S(x_1,x_2,x_3)\otimes \Lambda^i(y_1,y_2,y_3)$.
\end{center}
 We shall compute the invariants for each of these summands individually.\\
 
The actions of $G$ on $\Lambda^3(M^*) = \langle y_1y_2y_3 \rangle$ and $\Lambda^0(M^*) = \langle 1 \rangle$ are trivial. We obtain immediately:

\begin{prop}\
\begin{itemize}
\item[(a)] $(S(M^*) \otimes \Lambda^3(M^*))^G$ is a free $A$-module generated by $y_1y_2y_3$ and $by_1y_2y_3$;
\item[(b)] $(S(M^*) \otimes \Lambda^0(M^*))^G$ is a free $A$-module generated by $1$ and $b$.
\end{itemize}
\end{prop}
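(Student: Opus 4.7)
The plan is to reduce both statements to the previous proposition, which describes $S(M^*)^G$ as a free $A$-module on $\{1,b\}$. The key point is that in each case the exterior factor is a one-dimensional $G$-module on which the action is trivial, so the tensor product behaves as a free rank-one extension.

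For part (b), this is immediate: $\Lambda^0(M^*)$ is the trivial module $\langle 1 \rangle$, so $S(M^*) \otimes \Lambda^0(M^*) \cong S(M^*)$ as $kG$-modules, and taking $G$-invariants gives exactly $S(M^*)^G$, which is freely generated over $A$ by $1$ and $b$.

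For part (a), the only thing to verify is that $G$ acts trivially on the one-dimensional space $\Lambda^3(M^*) = \langle y_1 y_2 y_3 \rangle$. Since $G = \langle t, i \rangle$, I would check this on the two generators. For $t$, using $t(y_s) = y_{s+1}$ (subscripts mod $3$) and the fact that in the graded-commutative algebra an even cyclic permutation of three degree-one elements is the identity, one computes $t(y_1 y_2 y_3) = y_2 y_3 y_1 = y_1 y_2 y_3$. For $i$, using $i(y_1)=y_1$, $i(y_2)=-y_2$, $i(y_3)=-y_3$, one gets $i(y_1 y_2 y_3) = (-1)^2 y_1 y_2 y_3 = y_1 y_2 y_3$. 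So the $G$-action on $\Lambda^3(M^*)$ is trivial, whence
\[
(S(M^*) \otimes \Lambda^3(M^*))^G = S(M^*)^G \otimes \langle y_1 y_2 y_3 \rangle,
\]
and the previous proposition gives the claimed free generators $y_1 y_2 y_3$ and $b y_1 y_2 y_3$ over $A$.

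There is no substantial obstacle here; the whole argument is a one-line reduction once the $G$-action on the top exterior power is checked. The only mildly delicate point is remembering that the sign coming from the cyclic permutation under $t$ cancels correctly, and that $i$ acts with determinant $(-1)(-1)(1) = 1$ on $M^*$ in the chosen basis. Both facts are visible directly from the matrices $\rho(t)$ and $\rho(i)$ given earlier (both have determinant $1$), which is of course consistent with $G = \SL_2(\F_3)$ acting on $M$ by conjugation.
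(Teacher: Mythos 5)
Your argument is correct and matches the paper's own (essentially immediate) proof: the paper simply notes that $G$ acts trivially on $\Lambda^0(M^*)$ and $\Lambda^3(M^*)$ and deduces the result from the preceding proposition on $S(M^*)^G$. Your explicit verification on the generators $t$ and $i$, including the sign bookkeeping, just fills in the detail the paper leaves to the reader.
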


On the other hand we have 
\begin{center}
    $\Lambda^1(M^*) = \langle y_1,y_2,y_3 \rangle \cong M^*$ and $\Lambda^2(M^*) = \langle y_2y_3,y_3y_1,y_1y_2 \rangle \cong M^*$.
\end{center} 

\iffalse
We will need the following result:

\begin{Lemma}\label{use molien}
Let $k$ be a field, $G$ a group, and $\mathcal{A}$ a $\k G$-algebra. Let $H \leq G$ and let $T$ be a set of left coset representatives for $H$ in $G$. Then the map $\Theta: \mathcal{A}^H \rightarrow (\mathcal{A} \otimes \kk(G/H))^G$ defined by
\[\Theta(f) = \sum_t t ( f \otimes t)\] is an isomorphism of $\mathcal{A}^G$-modules.
\end{Lemma}

\begin{proof} An adaptation of the proof of Proposition \ref{Theta}, found in \cite{E}.
\end{proof}
\fi

Applying Lemma \ref{use molien} to the above with $H = Q_8$, $G = \SL_2(F)$ and $\mathcal{A} = S(M^*) \otimes W$ yields a pair of $S(M^*)^G$-module isomorphisms $\Theta_i: (S(M^*) \otimes W)^H \rightarrow (S(M^*) \otimes \Lambda^i(M^*))^G$ for $i=1,2$, defined by
\begin{align*} \Theta_1(f) &=  \sum_{j=0}^2 t^j ( f \otimes y_{j+1})\\
\Theta_2(f) &= \sum_{j=0}^2 t^j ( f \otimes y_{j+2}y_{j+3})
\end{align*}
with indices understood modulo 3.

Note that elements of $(S(M^*) \otimes W)^H$ are precisely those $f \in S(M^*)$ satisfying $h ( f) = \chi(h) f$. In other words, they are the relative invariants $S(M^*)^H_\chi$. As $|H| \neq 0 \mod 3$, $S(M^*)^H_{\chi}$ is a Cohen-Macaulay $S(M^*)^H$-module, by Lemma \ref{EH}. It cannot be a free $S(M^*)^H$-module, by \cite{R}, since $H$ is not a reflection group. However, since $(S(M^*) \otimes W)^H$ is a finitely generated $S(M^*)^H$ module, which is in turn a fintely generated $A$-module, $(S(M^*) \otimes W)^H$ is finitely generated over $A$. Since it is Cohen-Macaulay, it is therefore a free $A$-module. We have therefore proved that $S(M^*) \otimes \Lambda^i(M^*))^G$ is a free $A$-module, for $i=1,2$. In particular, this shows that $R(M)$ is a Cohen-Macaulay $S(M^*)^G$-module.

It remains to find generators for $S(M^*)^H_\chi$. Note once more that the action of $H$ is not faithful as $K \leq H$. Write $\overline{H} = H/K$. As $\overline{H}$ is non-modular we can use our adaptation of \eqref{molien} to compute the Hilbert Series of $S(M^*)^H_\chi$ . We find:

\begin{align*} H(S(M^*)_\chi^H, t) &= \frac14 \sum_{\sigma \in \overline{H}} \frac{\chi^0(\sigma)}{\det^0(\id_M-\rho(\sigma) t)}\\
&=\frac14 \left( \frac{1}{(1-t)^3} + \frac{1}{(1-t)(1+t)^2} + 2\frac{-1}{(1-t)(1+t)^2} \right)\\
&= \frac14 \left(\frac{1}{(1-t)^3}-\frac{1}{(1-t)(1+t^2)} \right)\\
&= \frac14 \left(\frac{(1+t)^2-(1-t)^2}{(1-t)^3(1+t)^2} \right)\\
&= \frac{t}{(1-t)^3(1+t)^2} \\
&=  \frac{t}{(1-t)(1-t^2)^2} \\
&= \frac{t+t^2}{(1-t^2)^3}
\end{align*}

A homogeneous system of parameters for $S(M^*)^H$ is given by $\{x_1^2,x_2^2,x_3^2\}$, and the above shows that $S(M^*)^H_{\chi}$ is a free module over the subalgebra $F[x_1^2,x_2^2,x_3^2]$ with generators in degrees 1 and 2, which are easily seen to be $x_1$ and $x_2x_3$. However, we need generators over $A$, so we rewrite the series as
\begin{align*} \frac{(t+t^2)(1+t^2)}{(1-t^2)(1-t^2)(1-t^4)}
&= \frac{(t+t^3)}{(1-t^2)(1-t)(1-t^4)}\\
&=\frac{(t+t^3)(1+t+t^2)}{(1-t^2)(1-t^3)(1-t^4)}\\
&= \frac{(t+t^2+2t^3+t^4+t^5)}{(1-t^2)(1-t^3)(1-t^4)}.
\end{align*}

Thus, $S(M^*)^H_{\chi}$ is a free $A$-module with generators in degrees 1,2,3 (twice), 4 and 5.

\begin{prop}
$S(M^*)^H_{\chi}$ is generated freely over $A$ by the set 
$$\{x_1,x_2x_3,x_1^3,x_1x_2^2,x_2^3x_3,x_1^3x_2^2\}.$$
\end{prop}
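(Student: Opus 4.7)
The plan is as follows. First I would verify that each of the six proposed elements lies in $S(M^*)^H_\chi$. Since $K=\{\pm e\}$ acts trivially on $M^*$, it suffices to check the transformation under $i$ and $j$; a direct calculation from the explicit actions recorded earlier shows that the monomial $x_1^a x_2^b x_3^c$ transforms by the character $\chi$ precisely when $b+c$ is even and $a+b$ is odd, and each of the six proposed elements satisfies both parity conditions.

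Next, the Hilbert series computation already carried out shows that $S(M^*)^H_\chi$ is a free $A$-module of rank six with generators in degrees $1, 2, 3, 3, 4, 5$. By the graded Nakayama lemma, to conclude that the six proposed elements form a free basis it is enough to show that their images in $S(M^*)^H_\chi / A_+ S(M^*)^H_\chi$ are linearly independent over $\F_3$. Equivalently, in each graded piece of $S(M^*)^H_\chi$ the candidates of that degree together with all $A_+$-multiples of strictly lower-degree candidates must span the whole piece, whose dimension is prescribed by the Hilbert series.

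I would then perform this check degree by degree. In degrees $1$ and $2$ the pieces are one-dimensional, the candidates $x_1$ and $x_2 x_3$ are nonzero, and $A_1 = 0$, so the claim is immediate. In degree $3$, expanding $a_1 x_1 = x_1^3 + x_1 x_2^2 + x_1 x_3^2$ shows that $\{a_1 x_1, x_1^3, x_1 x_2^2\}$ is row-equivalent to the three distinct monomials $x_1^3, x_1 x_2^2, x_1 x_3^2$ and hence linearly independent. In degree $4$, the expansion $a_1 x_2 x_3 = x_1^2 x_2 x_3 + x_2^3 x_3 + x_2 x_3^3$ combined with $a_2 x_1 = x_1^2 x_2 x_3$ and the candidate $x_2^3 x_3$ likewise reduces to three distinct monomials.

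The main obstacle is the degree-$5$ step. Here six elements must be shown linearly independent: $a_1^2 x_1$, $a_3 x_1$, $a_2 x_2 x_3$, $a_1 x_1^3$, $a_1 x_1 x_2^2$, and the new candidate $x_1^3 x_2^2$. Each expands in the six-dimensional $\F_3$-span of the monomials $x_1^5$, $x_1^3 x_2^2$, $x_1^3 x_3^2$, $x_1 x_2^4$, $x_1 x_2^2 x_3^2$, $x_1 x_3^4$, so the question reduces to showing that the resulting $6\times 6$ coefficient matrix over $\F_3$ is invertible, which I would verify by direct row reduction. Once this final calculation is done, the module-theoretic reduction above promotes the six elements to a free $A$-basis of $S(M^*)^H_\chi$.
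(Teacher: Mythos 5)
Your proposal is correct, but it takes a genuinely different route from the paper in the main step. The paper proves generation by brute force: it lists the admissible monomials $x_1^ix_2^jx_3^k$ (with $i\not\equiv j\equiv k \bmod 2$), checks degrees $1$ through $5$ by hand, and then runs a fairly intricate induction for all degrees $d\geq 6$, reducing each monomial modulo $A_+$-multiples of lower-degree ones; freeness is only deduced at the very end from the Hilbert series. You instead exploit the already-established facts that $S(M^*)^H_\chi$ is a free $A$-module whose generator degrees $1,2,3,3,4,5$ are forced by the Hilbert series: graded Nakayama then reduces the whole problem to showing the six candidates are linearly independent in $S(M^*)^H_\chi/A_+S(M^*)^H_\chi$, which collapses to finitely many linear-algebra checks in degrees $\leq 5$ and requires nothing at all in degrees $\geq 6$. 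This is a cleaner and shorter argument, at the cost of leaning harder on the structural input (Cohen--Macaulayness via Eagon--Hochster giving freeness over $A$) that the paper also has available but uses only at the end. Two small remarks: your degree-by-degree spanning sets implicitly require the inductive fact that the lower-degree candidates already generate $M$ in lower degrees (so that, e.g., $a_1x_1x_3^2$ in degree $5$ is absorbed into the span of $a_1^2x_1$, $a_1x_1^3$, $a_1x_1x_2^2$) --- this is fine but worth saying; and the one computation you defer, the $6\times 6$ matrix over $\F_3$ in degree $5$, does work out (its determinant is $2\not\equiv 0 \bmod 3$), so the argument is complete once that row reduction is written down.
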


\begin{proof}
It is easy to see each of the given monomials lies in $S(M^*)^H_{\chi}$. Let $B$ be the $A$-module generated by the given set. By the previous paragraph, the elements of $S(M^*)^H_{\chi}$ are sums of monomials of the form $m =x_1^ix_2^jx_3^k$ where $i \neq j=k \mod 2$. We will prove by induction on $d:=\deg(m)$ that every such monomial belongs to $B$.

If $d=1$, then $m=x_1$ and we are done. If $d=2$, then $m = x_2x_3$ and we are done. 

If $d=3$ then $m = x_1^3, m=  x_1x_2^2$ or $m = x_1x_3^2$. In the first two cases we are done, and in the third we have
\[x_1x_3^2 = a_1x_1 - x_1^3 - x_1x_2^2 \in B\] as desired.

If $d=4$ then $m = x_1^2x_2x_3$,  $m= x_2^3x_3$ or $m=x_2x_3^2$. In the first case we have
\[m = a_2x_1 \in B.\] In the second case we are done, and in the third we have
\[x_2x_3^3 = a_1x_2x_3-a_2x_1-x_2^3x_3 \in B\] as desired.

If $d=5$ then $m \in \{x_1^5, x_1^3x_2^2, x_1^3x_3^2,x_1x_2^4,x_1x_2^2x_3^2,x_1x_3^4\}$. 
We have $x_1^3x_2^2 \in B$ and $x_1x_2^2x_3^2 = a_2x_2x_3 \in B$. Now we have
\[a_1x_1x_2^2 = x_1^3x_2^2+x_1x_2^4+x_1x_2^2x_3^2\] which shows that $x_1x_2^4 \in B$. 
Further,
\[a_1^2x_1-a_3x_1 = -x_1^3x_2^2-x_1^3x_3^2-x_1x_2^2x_3^2\] which shows that $x_1^3x_3^2 \in B$.
Now
\[a_1x_1^3 = x_1^5+x_1^3x_2^2+x_1^3x_3^2\] which shows that $x_1^5 \in B$.
And finally,
\[a_3x_1 = x_1^5+x_1x_2^4+x_1x_3^4\] which shows that $x_1x_3^4 \in B$ as required.

Now assume that $d \geq 6$ and assume the claim is proven for all monomials of degree $<d$.
Suppose first that $d$ is even, then we have
$m = x_1^{2i}x_2^{2j+1}x_3^{2k+1}$ for some $i,j,k \geq 0$ with $i+j+k \geq 2$. If $i \geq 1$ we may write
\[m = a_2x_1^{2i-1}x_2^{2j}x_3^{2k} \in B\] by induction, so we may assume $i=0$ and $j+k \geq 2$.
Let $j \geq 1$, then we have
\begin{align*} m &= a_1x_2^{2j-1}x_3^{2k+1}-x_1^{2}x_2^{2j-1}x_3^{2k+1}-x_2^{2j-1}x_3^{2k+3}\\
&= a_1x_2^{2j-1}x_3^{2k+1}-a_2x_1x_2^{2j-2}x_3^{2k}-x_2^{2j-1}x_3^{2k+3}
\end{align*}
By induction, the first two terms lie in $B$ and we have
\begin{equation}\label{rel} x_2^{2j+1}x_3^{2k+1}= -x_2^{2j-1}x_3^{2k+3} \mod B\end{equation} for all $j \geq 1$.
Iterating the argument shows that
\begin{equation}\label{rel2} x_2^{2j+1}x_3^{2k+1}= x_2^{2j-3}x_3^{2k+5} \mod B\end{equation} for all $j \geq 2$.
However, we also have
\begin{align*} m &= a_3x_2^{2j-3}x_3^{2k+1}-x_1^{4}x_2^{2j-3}x_3^{2k+1}-x_2^{2j-3}x_3^{2k+5}\\
&= a_3x_2^{2j-3}x_3^{2k+1}-a_2x_1^{3}x_2^{2j-4}x_3^{2k}-x_2^{2j-3}x_3^{2k+5}
\end{align*}
which shows that \begin{equation} x_2^{2j+1}x_3^{2k+1}= -x_2^{2j-3}x_3^{2k+5} \mod B\end{equation} for all $j \geq 2$. By \eqref{rel} and \eqref{rel2} we must have $$x_2^{2j-3}x_3^{2k+5} = x_2^{2j-1}x_3^{2k+3} = x_2^{2j+1}x_3^{2k+1} = 0 \mod B$$ for all $j \geq 2, k \geq 0$ as required.

Now suppose that $d$ is odd, so $d \geq 7$ and $m = x_1^{2i+1}x_2^{2j}x_3^{2k}$ for some $i,j,k \geq 0$ with $i+j+k \geq 3$. If $j \geq 1$ and $k \geq 1$ then we may write
\[m = a_2x_1^{2i}x_2^{2j-1}x_3^{2k-1} \in B\]
by induction. So we may assume $j=0$ or $k=0$. Assume $k=0$, then $i+j \geq 3$, and we have, for all $i, j \geq 1$,
\begin{align*} m &= a_1x_1^{2i-1}x_2^{2j}-x_1^{2i-1}x_2^{2j+2}-x_1^{2i-1}x_2^{2j}x_3^{2}\\
&= a_1x_1^{2i-1}x_2^{2j}-x_1^{2i-1}x_2^{2j+2}-a_2x_1^{2i-2}x_2^{2j-1}x_3
\end{align*}
By induction, the first and last terms lie in $B$ and we have
\begin{equation}\label{oddrel} x_1^{2i+1}x_2^{2j}= -x_1^{2i-1}x_2^{2j+2} \mod B\end{equation} for all $i, j \geq 1$.
Iterating the argument shows that
\begin{equation}\label{oddrel2} x_1^{2i+1}x_2^{2j}= x_1^{2j-3}x_2{2j+4} \mod B\end{equation} for all $i \geq 2, j \geq 1$.
However, we also have
\begin{align*} m &= a_3x_1^{2i-3}x_2^{2j}-x_1^{2i-3}x_2^{2j+4}-x_1^{2i-3}x_2^{2j}x_3^{2}\\
&= a_1x_1^{2i-3}x_2^{2j}-x_1^{2i-3}x_2^{2j+4}-a_2x_1^{2i-4}x_2^{2j-1}x_3
\end{align*}
for all $i \geq 2, j \geq 1$ which shows that 
 \begin{equation} x_1^{2i+1}x_2^{2j}= -x_1^{2i-3}x_2^{2j+4} \mod B.\end{equation}
By \eqref{oddrel} and \eqref{oddrel2} we must have
 \begin{equation}\label{j} x_1^{2i+1}x_2^{2j}= x_1^{2i-1}x_2^{2j+2}=x_1^{2i-3}x_2^{2j+4} = 0 \mod B\end{equation} for all $i \geq 2$ and $j \geq 1$. A similar argument shows that 
 \begin{equation}\label{k} x_1^{2i+1}x_3^{2k}= x_1^{2i-1}x_3^{2k+2}=x_1^{2i-3}x_3^{2k+4} = 0 \mod B\end{equation} for all $i \geq 2$ and $k \geq 1$. It remains to show that $x_1^d \in B$: we have
\[x_1^d = a_1(x_1^{d-2}) - x_1^{d-2}x_2^2 - x_1^{d-2}x_3^2.\]
The first term lies in $B$ by induction, the others by \eqref{j} and \eqref{k} above. This completes the induction and the claim is proved.

Therefore $B = S(M^*)^H_{\chi}$ as claimed. Further, the generating set must be free, because the Hilbert series of $B$ is bounded above by the Hilbert series of $ S(M^*)^H_{\chi}$ with equality if and only if the generating set is free.
\end{proof}

\begin{Corollary}\
\begin{itemize}
\item[(a)] $(S(M^*) \otimes \Lambda^1(M^*))^G$ is generated as an $A$-module by the set
\begin{align*} \{c_1 &= x_1y_1+x_2y_2+x_3y_3,\\
c_2 &= x_2x_3y_1+x_3x_1y_2+x_1x_2y_3,\\
c_3 &= x_1^3y_1+x_2^3y_2+x_3^3y_3,\\
c_4 &=x_1x_2^2y_1+x_2x_3^2y_2+x_3x_1^2y_3,\\
c_5 &= x_2^3x_3y_1+x_3^3x_1y_2+x_1^3x_2y_3,\\
c_6 &= x_1^3x_2^2y_1+x_2^3x_3^2y_2+x_3^3x_1^2y_3\}.\end{align*}
\item[(b)]  $(S(M^*) \otimes \Lambda^2(M^*))^G$ is generated as an $A$-module by the set
\begin{align*} \{d_1 &= x_1y_2y_3+x_2y_3y_1+x_3y_1y_2,\\ 
d_2 &= x_2x_3y_2y_3+x_3x_1y_3y_1+x_1x_2y_1y_2,\\
d_3 &= x_1^3y_2y_3+x_2^3y_3y_1+x_3^3y_1y_2,\\
d_4 &=x_1x_2^2y_2y_3+x_2x_3^2y_3y_1+x_3x_1^2y_1y_2,\\
d_5 &= x_2^3x_3y_2y_3+x_3^3x_1y_3y_1+x_1^3x_2y_1y_2,\\
d_6 &= x_1^3x_2^2y_2y_3+x_2^3x_3^2y_3y_1+x_3^3x_1^2y_1y_2\}.\end{align*}
\end{itemize}
\end{Corollary}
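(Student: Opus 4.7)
The plan is to obtain the two generating sets by applying the $A$-module isomorphisms $\Theta_1$ and $\Theta_2$ built from Lemma \ref{use molien} (and already described immediately before the statement) to the free $A$-module basis $\{x_1, x_2x_3, x_1^3, x_1x_2^2, x_2^3x_3, x_1^3x_2^2\}$ of $S(M^*)^H_\chi$ furnished by the preceding proposition. Since each $\Theta_i$ is an $A$-module isomorphism and the domain is a free $A$-module of rank six, the twelve resulting images automatically constitute free $A$-bases for the two target modules $(S(M^*) \otimes \Lambda^i(M^*))^G$. Consequently the corollary reduces to computing these images explicitly.

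To carry out the computation I would expand each $\Theta_i(f)$ using the cyclic action $t(x_s) = x_{s+1}$ (indices modulo $3$), together with the analogous cyclic action on the $y_s$. For $\Theta_1$ the output is the $t$-orbit sum whose $j$-th term pairs $t^j(f)$ with $y_{j+1}$; evaluating at the six generators in the order they are listed recovers $c_1, c_2, c_3, c_4, c_5, c_6$ directly. The case of $\Theta_2$ is entirely parallel, with the second tensor factor running through $y_2 y_3, y_3 y_1, y_1 y_2$ in place of $y_1, y_2, y_3$, so the orbit sums yield $d_1, \ldots, d_6$ in the same order.

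No substantial obstacle is anticipated. The whole argument is a routine orbit-sum exercise, the only point requiring care being the bookkeeping of the cyclic relabelling of indices (for instance $t(x_2 x_3) = x_3 x_1$ and not $x_3 x_2$, and similarly on the $y$-side). Freeness of the resulting generating sets over $A$ is automatic from the isomorphism property of $\Theta_i$ combined with the freeness of the basis of $S(M^*)^H_\chi$ established in the preceding proposition, so no separate argument for minimality or linear independence is needed.
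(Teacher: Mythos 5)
Your proposal is correct and is exactly the paper's (implicit) argument: the corollary follows by applying the $S(M^*)^G$-module isomorphisms $\Theta_1,\Theta_2$ to the free $A$-basis $\{x_1,x_2x_3,x_1^3,x_1x_2^2,x_2^3x_3,x_1^3x_2^2\}$ of $S(M^*)^H_\chi$, and the orbit-sum computation with $t(x_s)=x_{s+1}$ yields precisely the listed $c_i$ and $d_i$. Your reading of the $j$-th summand as $t^j(f)$ paired with the fixed basis element $y_{j+1}$ (resp.\ $y_{j+2}y_{j+3}$) is the one consistent with the stated generators, and the observation that freeness transports along the isomorphism is also sound.
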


\begin{Theorem}\label{Final}
$R^G$ is minimally generated as a commutative-graded algebra by
\[\{a_1,a_2,a_3,b,c_1,\ldots, c_6, d_1,d_2,d_3,y_1y_2y_3\}.\] 
\end{Theorem}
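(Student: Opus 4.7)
The plan is to leverage the preceding $A$-module descriptions of each $\Lambda^i$-summand of $R^G$, and then use products in the graded-commutative algebra to eliminate redundant $A$-module generators. By the preceding propositions and corollary, the following set of 16 elements generates $R^G$ as an $A$-module:
\[
\{1,b\}\cup\{c_1,\dots,c_6\}\cup\{d_1,\dots,d_6\}\cup\{y_1y_2y_3,\,by_1y_2y_3\}.
\]
Adjoining $a_1,a_2,a_3$ immediately yields an algebra generating set, and the trivial factorisation $by_1y_2y_3=b\cdot y_1y_2y_3$ allows us to remove one element. The argument that remains has two parts: show that $d_4,d_5,d_6$ can also be omitted, and then show that what is left cannot be reduced further.

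For the elimination I would observe that for $i<j$ the product $c_ic_j$ lies in $(S(M^*)\otimes\Lambda^2(M^*))^G$, which is free over $A$ on $d_1,\dots,d_6$, and so admits a unique $A$-linear decomposition in that basis. A direct expansion (using $y_s^2=0$ and $y_ry_s=-y_sy_r$, then comparing coefficients of $y_2y_3,\,y_3y_1,\,y_1y_2$) will give the degree-8 identity
\[c_1c_2 \;=\; d_3-d_4-a_1d_1,\]
which places $d_4$ in the subalgebra $R'$ generated by the proposed set. An analogous computation with $c_1c_3$ in degree 10 expresses $d_5$ as an $A$-linear combination of $c_1c_3$, $a_1d_2$, and $a_2d_1$, and a final degree-12 product such as $c_2c_3$ produces $d_6$ modulo elements of $R'$ already known to be there (notably $a_1d_3$, $a_1d_4$, $a_2d_2$, $a_3d_1$, $a_1^2d_1$).

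Minimality is then a bi-degree argument using the grading of $R$ by both total degree and $y$-degree. Products in $R$ preserve the $y$-degree additively, and every element of odd $y$-degree squares to zero in characteristic 3. Consequently, no nontrivial product of $c_i$'s or $d_i$'s lands in $\Lambda^0$, forcing $a_1,a_2,a_3,b$ to be retained as the known minimal generators of $S(M^*)^G$. Likewise, $\Lambda^1$-invariants can only arise from $\Lambda^0\cdot\Lambda^1$ products, so the $A$-freeness of $c_1,\dots,c_6$ forces all of them to be kept. For $\Lambda^2$: in degree 4 only $d_1$ is available; in degree 6 the only pieces are $a_1d_1$ and $d_2$ (no $c_ic_j$ has degree 6), so $d_2$ is indispensable; in degree 8 the three-dimensional space spanned by $a_1d_1,d_3,d_4$ admits a single new relation coming from $c_1c_2$, which allows us to discard $d_4$ while keeping $d_3$. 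Finally, $y_1y_2y_3$ is the unique nonzero degree-3 element of $\Lambda^3$ and cannot be produced from any product, since the cheapest $\Lambda^1\cdot\Lambda^2$ product is $c_1d_1$ in degree 7.

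The main obstacle will be the explicit verification of the relations expressing $d_5$ and $d_6$: while conceptually routine once the framework above is in place, they involve careful tracking of signs and coefficients in the exterior algebra in characteristic 3, and must be combined consistently with the elimination of $d_4$. Once these three relations are established, the bi-degree minimality argument concludes the proof.
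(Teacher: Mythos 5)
Your proposal is correct and follows essentially the same route as the paper: the same $A$-module generating set, the same three relations $c_1c_2$, $c_1c_3$, $c_2c_3$ used to discard $d_4,d_5,d_6$, and the same bidegree (total degree together with $y$-degree) argument, combined with $A$-freeness of the $c_i$ and $d_i$, to establish minimality. The only difference is cosmetic: you defer the explicit sign-and-coefficient verification of the relations, which the paper likewise states as a direct check.
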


\begin{proof}
 Obviously 
\[\{a_1,a_2,a_3,b,c_1,\ldots, c_6, d_1, \ldots, d_6,y_1y_2y_3\}\]  is a generating set. One may check the following relations:
\begin{align*} c_1c_2 &= -d_4-a_1d_1+d_3,\\
c_1c_3 &= a_1d_2+d_5-a_2d_1,\\
%c_1c_4 &= -a_1d_2+d_5-a_2d_1,\\
%c_1c_5 &= a_1d_4+a_1d_3+a_3d_1+a_1^2d_1,\\
c_2c_3 &= -d_6+a_1d_4-a_1d_3+a_2d_2-a_3d_1-a_1^2d_1.
\end{align*}
So $d_4,d_5$, and $d_6$ are obsolete. To see that the generating set cannot be pruned any further we consider $x$-
 and $y$- degrees separately. Each $c_i$ has $y$-degree 1 and $x$-degree $<6 = \deg(b)$, so it can only possibly be rewritten in terms of $a_1,a_2,a_3$ and $c_j$ where $\deg_x(c_j) \leq \deg_x(c_i)$, which is ruled out by freeness over $A$. Similarly each $d_i$ has $y$-degree 2 and $x$-degree $<6 = \deg(b)$, so it cannot be rewritten in terms of $a_1,a_2,a_3,b$ and $d_j$ where $\deg_x(d_j) \leq \deg_x(d_i)$ by freeness over $A$. 
Further, for $d_1$ and $d_2$, no product of the form $c_ic_j$ has small enough to $x$-degree to rewrite these (except for $c_1^2$ which is obviously zero). Finally, the first relation above must be the unique expression of $c_1c_2$ a in terms of $A(d_1,d_2,d_3,d_4,d_5,d_6)$ by freeness, so we cannot write $d_3$ in terms of $c_1c_2$ and $A(d_1,d_2)$. Since $\deg_x(d_3) = 3$, $c_1c_2$ is the only product of generators with $y$-degree one which could be a summand of $d_3$.  
\end{proof}

\vspace{2cm}

\bibliographystyle{amsplain}

\begin{thebibliography}{10}




\bibitem {AM} Alejandro ADEM, R. James MILGRAM: \textit{Cohomology of Finite Groups}. Springer Verlag Berlin Heidelberg, 2004

\bibitem {An} Nicolae ANGHEL: \textit{{${\rm SL}_2$}-polynomial invariance}. Romanian Journal of Pure and Applied Mathematics, volume 43, 1998

\bibitem {B} David J. BENSON: \textit{Polynomial invariants of finite groups}. Cambridge University Press, 1993

\bibitem {BC}  Abraham BROER, Jianjun CHUAI Chuai: \textit{ Modules of covariants in modular invariant theory}. Proc.
Lond. Math. Soc. (3), 100(3):705–735, 2010

\bibitem {CE} Henri CARTAN, Samuel EILENBERG: \textit{Homological Algebra}. Princeton University Press, 1956

\bibitem {CW} H.E.A. Eddy CAMPBELL, David L. WEHLAU: \textit{Modular Invariant Theory}. Springer Verlag Berlin-Heidelberg, 2011

\bibitem {DK} Harm DERKSEN Gregor KEMPER: \textit{Computing invariants of algebraic groups in arbitrary characteristic}. Advances in Mathematics, Volume 217, Issue 5, 2008

\bibitem{E} Jonathan ELMER: \textit{Cohen-Macaulay modules of covariants for cyclic $p$-groups}. Preprint, arXiv:2506.03677v1, 2025

\bibitem{EH} M. HOCHSTER and John A. EAGON: \textit{Cohen-Macaulay rings, invariant theory, and the generic
perfection of determinantal loci}. Amer. J. Math., 93:1020–1058, 1971.

\bibitem {FKS} Peter FLEISCHMANN, Gregor KEMPER and R. James SHANK: \textit{Depth and cohomological connectivity in modular invariant theory}. Trans. Amer. Math. Soc. 357, 3605-3621, 2005

\bibitem {M} Anja MEYER: \textit{On the modular cohomology of $GL(2,p^n)$ and $SL(2,p^n)$}. Preprint,  arXiv:2506.04720v1, 2025

\bibitem {R} Victor REINER: \textit{Free modules of relative invariants of finite groups}. Studies in Applied Mathematics, vol. 81, 181-184, 1989 

\bibitem {S} Richard G SWAN: \textit{The p-period of a finite group}. Ill. J. Math. 4, 341-346, 1960





\end{thebibliography}
%    Insert the bibliography data here.

\end{document}